\documentclass[12pt]{amsart}
\usepackage{latexsym, amsmath, amsthm}



%

{\theoremstyle{plain}
    \newtheorem{Thm}{\bf Theorem}[section]
    \newtheorem{Prop}[Thm]{\bf Proposition}

    \newtheorem{Cor}[Thm]{\bf Corollary}
    
    \newtheorem{Conj}[Thm]{\bf Conjecture}
}
{\theoremstyle{remark}
    \newtheorem{Rem}[Thm]{\bf Remark}
    \newtheorem{Exa}[Thm]{\bf Example}
}
{\theoremstyle{definition}
    \newtheorem{Def}[Thm]{\bf Definition}
}
\numberwithin{equation}{section}

\newcommand{\fm}{{\mathfrak m}}
\newcommand{\m}{{\mathfrak m}}

\newcommand{\ringR}{\text{$(R,\fm,k)$ }}

\newcommand{\hr}{{H_{\fm}^{d}(R)}}

\sloppy
\setlength{\topmargin}{-0.1in}
\setlength{\oddsidemargin}{0.0in}
\setlength{\evensidemargin}{0.0in}
\setlength{\textwidth}{5.7in}  
\setlength{\textheight}{8.2in}  

\newcommand{\excise}[1]{}

\title{A finiteness condition on local cohomology in positive characteristic}
\author[F.~Enescu]{Florian Enescu}

\address{Department of Mathematics and Statistics, Georgia State University, Atlanta, 30303}
\email{fenescu@gsu.edu}
\thanks{2000 {\em Mathematics Subject Classification\/}: 13A35, 13D40}
\thanks{The author was partially supported by the NSA Young Investigator Grant H98230-10-1-0166.}
\begin{document}
\begin{abstract}
In this paper we present a condition on a local Cohen-Macaulay F-injective ring of positive characteristic $p > 2$ which implies that its top local cohomology module with support in the maximal ideal has finitely many Frobenius compatible submodules.
\end{abstract}

\maketitle
\markboth{A finiteness condition on local cohomology in positive characteristic}{A finiteness condition on local cohomology in positive characteristic}

\section{Introduction}

In this note  $\ringR$ denotes a local ring of positive characteristic $p >0$ and dimension $d$. Let $F: R \to R$ defined by $F(r) =r^p$ for all $r \in R$ be the Frobenius homomorphism on $R$.  Let $M$ be an $R$-module and $F_M=F: M \to M$ a Frobenius action on $M$, that is, $F$ is additive and $F(rm) = r^pF(m)$, for all $r \in R, m \in M$. Following terminology that has been gaining ground recently, we will say that a $R$-submodule $N$ of $M$ is F-compatible if $F(N) \subseteq N$. 

In the past two decades, applications of the Frobenius homomorphism led to important contributions in commutative algebra. Among the remarkable classes of rings related to Frobenius, F-injective and F-pure rings hold a significant place. A ring $R$ is called F-pure if $F$ is a pure homomorphism. The Frobenius homomorphism induces an action on the local cohomology modules with support in the maximal ideal of $R$. A ring $R$ is called F-injective if $F$ acts injectively on all modules $H^i_{\fm}(R)$, $i=1, \ldots, d$. This holds if $R$ is F-pure. A Gorenstein ring $R$ is F-pure if and only if it is F-injective. For a more detailed account of these claims, we refer the reader to~\cite{EH}. The reader might also find~\cite{FW} useful for these topics.

Rings with  the property that the local cohomology modules $H^i_\fm(R)$ have finitely many F-compatible submodules have been studied in~\cite{EH}. In fact, a ring $R$ is called FH-finite if for all $i=1, \ldots, d$, $H^i_\fm (R)$ contains finitely many F-compatible submodules. A result obtained by Enescu and Hochster in~\cite{EH} states that an F-injective Gorenstein ring is FH-finite.

In this paper we prove the following result (see Corollary~\ref{maincor}). 

\begin{Thm}
\label{main}
Let $\ringR$ be a local F-injective Cohen-Macaulay ring of prime characteristic $p >2$  that admits a canonical module. Let $I$ an ideal of $R$
isomorphic to the canonical module of $R$. Assume that $R/I$ is F-injective. 

Then $R$ is FH-finite.
\end{Thm}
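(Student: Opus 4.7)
The plan is to reduce the FH-finiteness of $R$ to that of the quotient $R/I$, which will turn out to be Gorenstein, so that the Enescu--Hochster theorem quoted in the introduction applies directly.

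First I would verify that $R/I$ is Gorenstein of dimension $d-1$. From the short exact sequence $0 \to I \to R \to R/I \to 0$ together with $I \cong \omega_R$ being maximal Cohen--Macaulay, the depth lemma gives $\depth(R/I) \geq d-1$, while faithfulness of $\omega_R$ forces $I$ to contain a non-zerodivisor, so $\dim(R/I) = d-1$. Hence $R/I$ is Cohen--Macaulay. Applying $\Hom{R}{-}{\omega_R}$ to the sequence, using $\Hom{R}{I}{\omega_R} \cong \Hom{R}{\omega_R}{\omega_R} \cong R$ together with $\Ext{R}{1}{R}{\omega_R} = 0$, I obtain
\[
0 \to \Hom{R}{R/I}{\omega_R} \to \omega_R \to R \to \Ext{R}{1}{R/I}{\omega_R} \to 0,
\]
where the middle map is, via $\omega_R \cong I$, the inclusion $I \hookrightarrow R$. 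Therefore $\omega_{R/I} \cong \Ext{R}{1}{R/I}{\omega_R} \cong R/I$, so $R/I$ is quasi-Gorenstein and, being Cohen--Macaulay, Gorenstein.

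Since $R/I$ is Gorenstein and F-injective by hypothesis, the Enescu--Hochster theorem from the introduction gives that $R/I$ is FH-finite; in particular, $\Hm{\fm}{d-1}{R/I}$ has only finitely many F-compatible submodules. Next, the sequence $0 \to I \to R \to R/I \to 0$ is Frobenius-equivariant, since ideals of $R$ are automatically stable under the Frobenius. Taking local cohomology with support in $\fm$ and applying the Cohen--Macaulay vanishings to $R$, $I \cong \omega_R$, and $R/I$ yields the F-equivariant short exact sequence
\[
0 \to \Hm{\fm}{d-1}{R/I} \to \Hm{\fm}{d}{I} \to \hr \to 0.
\]
Since $R$ is Cohen--Macaulay, FH-finiteness of $R$ is equivalent to $\hr$ having only finitely many F-compatible submodules. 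These are in bijection with the F-compatible submodules of $\Hm{\fm}{d}{I}$ that contain $\Hm{\fm}{d-1}{R/I}$, so the problem reduces to bounding the F-compatible submodules of $\Hm{\fm}{d}{I}$.

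This last count is where I expect the main difficulty to lie. Via local duality applied to $I \cong \omega_R$, the module $\Hm{\fm}{d}{I}$ is isomorphic to $\E$ as an $R$-module, but the relevant Frobenius action is inherited from the restriction of the ring Frobenius of $R$ to the ideal $I$, not from any ring structure on $\E$. A useful preliminary observation is that this induced Frobenius is injective on $\Hm{\fm}{d}{I}$: if $F(y)=0$, then the image of $y$ in $\hr$ is F-torsion and hence vanishes by the F-injectivity of $R$, placing $y$ inside $\Hm{\fm}{d-1}{R/I}$, where the F-injectivity of $R/I$ forces $y=0$. Upgrading this injectivity into the desired finiteness of F-compatible submodules is the hard part. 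My plan is to pass through Matlis duality over the completion $\widehat R$, so that submodules of $\Hm{\fm}{d}{I}$ correspond to ideals of $\widehat R$ and the F-action dualizes to a Cartier-type operator on $\omega_R \cong I$ induced by Frobenius. The hypothesis $p>2$ should enter here, presumably through a Fedder-style splitting argument: combining the F-splitting of the Gorenstein quotient $R/I$ with the Cartier structure on $I$ to produce a Frobenius splitting on an auxiliary Gorenstein ring attached to the pair $(R,I)$, to which the Enescu--Hochster Gorenstein case can be applied to finish the count. Constructing that splitting and pinning down the precise role of $p>2$ is the core technical obstacle.
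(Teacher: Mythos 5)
There is a genuine gap: your argument stops exactly where the theorem's real content begins. The preliminary reductions are fine --- $R/I$ is indeed Gorenstein of dimension $d-1$, the sequence $0 \to H^{d-1}_{\fm}(R/I) \to H^{d}_{\fm}(I) \to H^{d}_{\fm}(R) \to 0$ is Frobenius-equivariant, and your injectivity observation for $F$ on $H^{d}_{\fm}(I)$ is correct --- but none of this yields finiteness. In particular, ``the problem reduces to bounding the F-compatible submodules of $H^{d}_{\fm}(I)$'' is not a reduction: the submodules you need are those containing $H^{d-1}_{\fm}(R/I)$, which are in bijection with the F-compatible submodules of $H^{d}_{\fm}(R)$ you started with; and bounding \emph{all} F-compatible submodules of the extension is a priori harder, since finiteness of compatible submodules is not inherited by extensions from the sub and the quotient. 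Knowing that $R/I$ is FH-finite therefore does not combine with anything you have proved to control $H^{d}_{\fm}(R)$. Your closing paragraph candidly defers the entire remaining work to an unconstructed ``auxiliary Gorenstein ring attached to the pair $(R,I)$'' together with a splitting whose existence you do not establish.

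That auxiliary object is precisely what the paper builds: the pseudocanonical cover $S(1) = R + It \subseteq R[T]/(T^2-1)$, which is local and Gorenstein with socle generator $ut$ modulo a parameter ideal $JS$, where $u$ generates the socle of $I/JI$. The paper shows $S(1)$ fails to be F-injective exactly when $u^{q}f^{(q-1)/2} \in (x_1^q,\dots,x_d^q)I$ for some large $q$; this is where $p>2$ enters --- for odd $q$ one has $(ut)^q = u^q f^{(q-1)/2}\,t$, so the Frobenius power stays in the $It$-component, whereas for $p=2$ it lands in $R$ and the criterion degenerates (Proposition~2.3(ii)) --- not through a Fedder-style splitting as you anticipate. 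The socle identification $\Soc(R/(I+(x_2,\dots,x_d))) \cong \Soc(I/JI)$ via multiplication by $x_1 \in I$ then converts the condition $u^q \in (x_1^q,\dots,x_d^q)I$ into $z^q \in I + (x_2^q,\dots,x_d^q)$ for a socle representative $z$ of the Gorenstein quotient $R/I$, and F-injectivity of $R/I$ (Frobenius closure of its parameter ideals) gives the contradiction. Finally, FH-finiteness of the F-injective Gorenstein ring $S(1)$ descends to $R$ because $R \to S(1)$ splits as a map of $R$-modules. To repair your proof you would need to carry out this construction (or an equivalent one) in full; as written, the key step is only a conjecture about what might work.
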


It should be noted that the hypothesis of the Theorem~\ref{main} implies that $R$ is F-pure, see Corollary 2.5 in~\cite{E}. It is perhaps natural to conjecture that a more general statement holds.

\begin{Conj}
\label{conj}
Let $\ringR$ be a Cohen-Macaulay F-pure ring of dimension $d$. Then $\hr$ contains finitely many F-compatible submodules, that is $R$ is FH-finite.
\end{Conj}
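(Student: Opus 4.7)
My plan is to reduce the problem, via Matlis duality, to the finiteness of certain Frobenius-stable submodules of the canonical module, and then to leverage the Cartier map $\phi: F_*\omega_R \to \omega_R$ that encodes F-purity. First I would pass to the $\fm$-adic completion $\widehat{R}$: completion preserves Cohen--Macaulayness and F-purity, commutes with taking local cohomology, and respects the Frobenius action, so F-compatible submodules of $H^d_\fm(R)$ correspond to those of $H^d_{\widehat\fm}(\widehat R)$. Thus assume $R$ is complete, so that $R$ admits a canonical module $\omega_R$. By local duality, $H^d_\fm(R) \cong \Hom{R}{\omega_R}{E_R(k)}$, and the natural Frobenius on $H^d_\fm(R)$ Matlis-dualizes (via Grothendieck duality for the finite map $F$) to a Cartier/trace map $\phi : F_* \omega_R \to \omega_R$, which is surjective precisely because $R \into F_*R$ is pure. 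Under Matlis duality, F-compatible submodules of $H^d_\fm(R)$ correspond bijectively to $R$-submodules $N \subseteq \omega_R$ with $\phi(F_*N) \subseteq N$, so the task becomes showing that $\omega_R$ has only finitely many such $\phi$-compatible submodules.

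Next I would exploit the fact that F-pure rings are reduced, so $R$ is reduced and Cohen--Macaulay, hence generically Gorenstein. Therefore $\omega_R$ embeds as an ideal $I \subseteq R$, and the Cartier map becomes an $R$-linear map $\phi: F_* I \to I$. On the Gorenstein locus of $\Spec R$, $\phi$ extends canonically to a splitting $\psi : F_* R \to R$, and one hopes to prolong $\psi$ to all of $R$ by the $S_2$-property of Cohen--Macaulay rings. Once such a splitting is in hand, $\phi$-compatible submodules of $I$ correspond to ideals of $R$ that are compatible with $\psi$; by Schwede's work on centers of F-purity, and the local analog of the Kumar--Mehta finiteness theorem for globally F-split projective varieties, the uniformly F-compatible ideals of an F-split local ring form a finite set. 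Combined with the bijection of the previous paragraph, this would conclude FH-finiteness.

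The main obstacle is precisely the extension of $\phi$ from $I$ to a Frobenius splitting of all of $R$. The Gorenstein locus has complement of codimension $\geq 2$ only under a quasi-Gorenstein-type hypothesis (Gorenstein in codimension one), without which the $S_2$-extension need not yield a genuine Frobenius splitting, and the Cartier structure on $\omega_R$ can genuinely differ from the restriction of any splitting of $R$. This is the gap that Theorem~\ref{main} closes by imposing the auxiliary hypothesis that $R/I$ be F-injective, which controls exactly this extension. To remove this hypothesis, one would likely need either a purely Cartier-algebraic argument — in the spirit of Blickle--Schwede's theory of F-finite Cartier modules — showing directly that the lattice of $\phi$-compatible submodules of $\omega_R$ is Noetherian for every F-pure Cohen--Macaulay $R$, or a reduction to the Gorenstein case via a finite cover (such as the index-one cover when $\omega_R$ is torsion in the divisor class group) together with a descent argument for F-compatible submodules. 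Either route requires substantially more machinery than that developed in the present paper, and I expect this to be the crux of any attack on the full conjecture.
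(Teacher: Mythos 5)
The statement you are addressing is stated in the paper as a \emph{conjecture}; the paper offers no proof of it, and proves it only under the additional hypothesis that $R$ admits a canonical ideal $I$ with $R/I$ F-injective (Theorem~\ref{main}), plus a few special cases (test ideal $\fm$-primary, dimension one, Stanley--Reisner). Your proposal likewise does not prove it, and to your credit you say so explicitly: the crux you isolate --- extending the Cartier map $\phi\colon F_*I \to I$ on a canonical ideal to an actual Frobenius splitting of $R$ --- is a genuine obstruction, since for a reduced Cohen--Macaulay ring the non-Gorenstein locus can have codimension one (e.g.\ three coordinate lines in the plane), so the $S_2$-extension argument is unavailable. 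So what you have written is a reduction plus an honest identification of the open step, not a proof. Your first reduction (complete, then Matlis-dualize the Frobenius action on $\hr$ to a surjective $p^{-1}$-linear map on $\omega_R$, so that F-compatible submodules of $\hr$ correspond to $\phi$-stable submodules of a canonical ideal) is sound and is consistent with the reduction to the complete F-finite F-split case recorded in Discussion~4.5 of~\cite{EH}, which the paper cites.

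Beyond the gap you name, one intermediate step deserves a caveat: even granting a splitting $\psi\colon F_*R\to R$ restricting to $\phi$ on $F_*I$, the finiteness results you invoke (Schwede, Kumar--Mehta, and Theorem~4.17 of~\cite{EH}) give finiteness of ideals compatible with a fixed surjective $\psi$, which would indeed suffice; but you should not conflate these with \emph{uniformly} F-compatible ideals (compatible with every $p^{-e}$-linear map), since a $\phi$-stable submodule of $I$ has no reason to be uniformly compatible. Note also that the paper's own partial result takes an entirely different, more elementary route: it never dualizes to $\omega_R$, but instead builds the Gorenstein pseudocanonical cover $S(1)=R+It\subseteq R[T]/(T^2-1)$, shows $S(1)$ is F-injective using the socle comparison of Proposition~\ref{two} together with F-injectivity of $R/I$, and then descends FH-finiteness along the split inclusion $R\to S(1)$. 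If you want to pursue your Cartier-theoretic line, the place it has been carried out is L.~Ma's subsequent work (alluded to in the introduction), which proves results in exactly the antinilpotency/Cartier-module spirit of your closing paragraph; within the four corners of this paper, however, the conjecture remains open and your proposal does not close it.
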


This question was already raised in Discussion 4.5 in~\cite{EH} although not as a conjecture. It was shown there that the statement can be reduced to the case when $R$ is Cohen-Macaulay, complete, F-finite and F-split. 

It should be observed that FH-finite local rings are closely connected to the notion of antinilpotency for local cohomology modules with support in the maximal ideal. We will need this notion later in the paper so we will mention some basic facts about it.  The module $M$ is called antinilpotent if $F$ is injective on $M/N$  for all F-compatible $R$-submodules $N$ of $M$.  Let $R_0=R, R_n = R[[x_1, \ldots, x_n]]$ for $n \in \mathbb{N}_{\geq 1}$. The following theorem was proved in~\cite{EH}. 

\begin{Thm}
\label{anti}
Let $\ringR$ be a local ring of dimension $d$. Then $R_n$ is FH-finite for all $n \in \mathbb{N}$ if and only if $H^i_\fm(R)$ is antinilpotent for all $i=0, \ldots, d$.
\end{Thm}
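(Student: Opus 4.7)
The plan is to prove both implications by exploiting the explicit structure of local cohomology under a formal power series extension. With $S=R[[x]]$ and $\fn=(\fm,x)$, tensoring the \v{C}ech complex on $\fm$ with that on $x$, using that $H^k_{(x)}(S)=0$ for $k\ne 1$ and that $S$ is $R$-flat, produces a Frobenius-equivariant isomorphism
$$H^j_\fn(S)\;\cong\; H^{j-1}_\fm(R)\otimes_R (S_x/S),$$
with Frobenius sending $m\otimes x^{-k}$ to $F(m)\otimes x^{-pk}$. Both directions then reduce, by induction on $n$, to analyzing F-compatible $S$-submodules of this tensor product.

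For the implication that antinilpotency of each $H^i_\fm(R)$ forces $R_n$ to be FH-finite, the core claim is that antinilpotency propagates: if $M:=H^{j-1}_\fm(R)$ is antinilpotent, then so is $M\otimes_R(S_x/S)$ as a Frobenius $S$-module. Given an F-compatible $S$-submodule $\mathcal N$ and a hypothetical nonzero $\bar z$ in the kernel of $F$ on the quotient, I would use the $x$-torsion filtration together with the $S$-linearity of $\mathcal N$ to shift $\bar z$ down into the lowest $x^{-1}$-layer, producing a witness to non-antinilpotency of $M$ itself and a contradiction. Once each $H^j_{\fm_n}(R_n)$ is known to be antinilpotent, FH-finiteness follows from the background result, proved in~\cite{EH}, that an antinilpotent Artinian Frobenius module possesses only finitely many F-compatible submodules.

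For the converse, I would argue contrapositively: assume some $H^i_\fm(R)$ fails antinilpotency, and pick an F-compatible $N\subseteq H^i_\fm(R)$ together with $y\in H^i_\fm(R)\setminus N$ satisfying $F(y)\in N$. Inside $H^{i+1}_\fn(S)$ I would then consider the family
$$N_k\;=\;N\otimes_R(S_x/S)\;+\;\sum_{j=1}^{k} R\,(y\otimes x^{-j}),\qquad k\ge 1.$$
Multiplication by $x$ sends $y\otimes x^{-j}$ to $y\otimes x^{-(j-1)}$, which already lives in the previous summand (and vanishes for $j=1$), so each $N_k$ is an $S$-submodule; it is F-compatible since $F(y\otimes x^{-j})=F(y)\otimes x^{-pj}\in N\otimes_R(S_x/S)\subseteq N_k$. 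Projecting onto the $x^{-k}$-component in the decomposition $H^{i+1}_\fn(S)=\bigoplus_{k\ge 1}H^i_\fm(R)\cdot x^{-k}$, the $x^{-k}$-coefficient of every element of $N_{k-1}$ lies in $N$, whereas $y\otimes x^{-k}\in N_k$ projects to $y\notin N$; hence $N_1\subsetneq N_2\subsetneq\cdots$ is a strict infinite chain, contradicting FH-finiteness of $R_1$.

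The main obstacle is the preservation step in Direction 1. Frobenius does not respect the $x$-torsion filtration, since it sends the $x^{-k}$-layer into the $x^{-pk}$-layer, so one cannot simply intersect $\mathcal N$ with a single layer and invoke antinilpotency of $M$ directly. The descent must exploit the $S$-linearity of $\mathcal N$ by multiplying by appropriate powers of $x$ to shift a purported counterexample down to the bottom layer, where $M$'s antinilpotency applies. The supporting fact that antinilpotent Artinian Frobenius modules have finitely many F-compatible submodules is itself a nontrivial result from~\cite{EH}, combining DCC with the injectivity of $F$ on every subquotient to force a finite lattice.
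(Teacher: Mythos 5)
First, a point of reference: the paper does not prove this statement --- it is quoted from~\cite{EH} (``The following theorem was proved in~\cite{EH}''), so there is no in-paper proof to compare against. Measured against the original argument in Section~4 of~\cite{EH}, your proposal follows essentially the same route: the \v{C}ech--K\"unneth identification $H^j_\fn(S)\cong H^{j-1}_\fm(R)\otimes_R(S_x/S)$ with $F(m\otimes x^{-k})=F(m)\otimes x^{-pk}$ is correct (it uses that $S_x/S=\bigoplus_{k\ge 1}Rx^{-k}$ is $R$-free and that $H^0_{(x)}(S)=0$), and your converse direction is complete: each $N_k$ is closed under multiplication by $x$ and by $R$ (hence by $S$, since every element is $x$-power torsion), is F-compatible because $F(y)\in N$, and the chain is strict by the $x^{-k}$-coefficient comparison, so a failure of antinilpotency already destroys FH-finiteness of $R_1$. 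Citing the~\cite{EH} theorem that an antinilpotent Artinian module over a complete local ring has only finitely many F-compatible submodules is legitimate; that is a genuinely separate input, and the content of the present theorem is the reduction to it.

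The one real gap is the propagation lemma in Direction 1, which you correctly identify as the crux but whose sketch, taken literally, proves too little. If $z=\sum_{k=1}^n m_k\otimes x^{-k}$ satisfies $F(z)\in\mathcal N$, multiplying by $x^{n-1}$ to ``shift $z$ into the $x^{-1}$-layer'' only yields $m_n\otimes x^{-1}\in\mathcal N$, i.e.\ $m_n\in N_1:=\{m:\,m\otimes x^{-1}\in\mathcal N\}$; since $N_1$ is the largest of the layer submodules, this does not let you remove the top term $m_n\otimes x^{-n}$ from $z$. The working argument is a double induction. For each $k$ set $N_k:=\{m\in M:\ m\otimes x^{-k}+w\in\mathcal N\ \text{for some $w$ supported in layers $x^{-j}$, $j<k$}\}$. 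Multiplication by $x$ gives $N_1\supseteq N_2\supseteq\cdots$, and F-compatibility of $\mathcal N$ gives $F(N_k)\subseteq N_{pk}\subseteq N_k$, so each $N_k$ is an F-compatible $R$-submodule of $M$. Now induct on the $x$-order $n$ of $z$: the top layer of $F(z)\in\mathcal N$ shows $F(m_n)\in N_{pn}\subseteq N_n$, so antinilpotency of $M$ applied to $N_n$ (not $N_1$) gives $m_n\in N_n$; choosing a witness $m_n\otimes x^{-n}+w\in\mathcal N$ and replacing $z$ by $z-(m_n\otimes x^{-n}+w)$, which still has Frobenius image in $\mathcal N$ but has smaller $x$-order, the inductive hypothesis finishes. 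With this lemma supplied, your induction on $n$ and the cited finiteness theorem complete Direction 1, and the proof as a whole is sound.
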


For more details on antinilpotent modules we refer the reader to Section~4 in~\cite{EH}.

The author was informed that, prompted by the posting of a preprint version of this paper on the arXiv, L.~Ma has produced a different proof of Theorem~\ref{main} that also covers the case $p=2$.  The author thanks the referee and K.~Schwede for expository suggestions.

\section{The F-injectivity of the pseudocanonical cover and FH-finite rings}

Let $\ringR$ be a local, Cohen-Macaulay with canonical module $\omega_R$.
Embed $\omega _R$ into $R$ (which can be done when $R$ is generically Gorenstein) and let $I$ be 
this ideal of $R$ isomorphic to $\omega_R$. For standard facts on the canonical module for Cohen-Macaulay rings that will be used in this
section we refer the reader to Chapter 3 in~\cite{BH}.

\begin{Def}For $f \in R$, let $S= S(f)= R + It$ the subring of $R[T]/(T^2-f)$, where $t$ is the image of $T$ in the quotient.
We call $S$ a {\it pseudocanonical cover} of $R$ {\it via} $f$. 
\end{Def}
It should be noted that this definition was presented in~\cite{E} under the assumption
that $f \in \fm$. Here, we will allow $f \in R$.

The main properties of  $S=S(f)$ are summarized in the following result obtained in~\cite{E}.

\begin{Thm}
\label{properties} Under the notations and assumptions just introduced, 
$S$ is local, with maximal ideal equal to $\fm + It$;
$S$ is domain if and only if $R$ is domain and $f$ is not a square  in the total ring of fractions of $R$; any $x_1,...,x_k$ regular sequence on $R$ forms a regular sequence on $S$:
$S$ is Gorenstein with the socle generator for $S/\underline{x}S$ given by $ut$, where $u$ is the socle generator of $I/\underline{x}I$ and 
$\underline{x}=x_1,...,x_d$ form a s.o.p. on $R$.
\end{Thm}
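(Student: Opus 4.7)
My plan is to handle the four assertions in the order they are stated, treating the structural properties quickly and reserving most of the effort for the Gorenstein claim.

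For locality, I would write any element of $S$ as $r + bt$ with $r \in R,\, b \in I$, and for $r \notin \fm$ exhibit an explicit inverse via the ``norm'' $(r+bt)(r-bt) = r^2 - b^2 f$. Since $r^2$ is a unit and $b^2 f \in \fm$ (as $b \in I \subseteq \fm$), this norm is a unit of $R$, so $r+bt$ is a unit in $S$. Combined with $S/(\fm + It) \cong R/\fm = k$, this identifies $\fm + It$ as the unique maximal ideal. For the domain statement I would use the embedding $S \hookrightarrow R[T]/(T^2 - f)$: if $R$ is a domain with fraction field $K$ and $f$ is not a square in $K$, then $T^2 - f$ is irreducible over $K$, so $K[T]/(T^2 - f)$ is a field into which $S$ embeds; the converse directions give explicit zero-divisors.

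For the regular-sequence assertion I would use that $S \cong R \oplus I$ as $R$-modules, and that $I \cong \omega_R$ is maximal Cohen-Macaulay, so $S$ is itself an MCM $R$-module. Any regular sequence on $R$ is therefore regular on $S$ as an $R$-module; since the $R$-action and the $S$-action coincide on elements of $R$, it is also regular on $S$ as an $S$-module.

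The substance lies in the Gorenstein claim, which I would obtain by showing that for an sop $\underline{x}=x_1,\ldots,x_d$ on $R$ (hence on $S$), the Artinian quotient $\bar S := S/\underline{x}S$ has one-dimensional socle. Writing a typical element as $\bar r + \bar b\, t$ with $\bar r \in R/\underline{x}R$ and $\bar b \in I/\underline{x}I$, annihilation by $\fm$ gives $\bar r \in \Soc(R/\underline{x}R)$ and $\bar b \in \Soc(I/\underline{x}I)$; annihilation by $ct$ for all $c \in I$, after using $t^2 = f$ to separate the $R$- and $It$-components, yields the additional conditions $I\bar r \subseteq \underline{x}I$ and $f\,I\bar b \subseteq \underline{x}R$.

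The main obstacle, and the crux of the argument, is to rule out nonzero $\bar r$. For this I would invoke the identification $I/\underline{x}I \cong \omega_{R/\underline{x}R}$, which is a faithful module over the Artinian ring $R/\underline{x}R$ (its annihilator is precisely $\underline{x}R$). Hence $I\bar r \subseteq \underline{x}I$ forces $\bar r \in \underline{x}R$, i.e.\ $\bar r = 0$. The condition $f\,I\bar b \subseteq \underline{x}R$ is then automatic, since $\bar b \in \Soc(I/\underline{x}I)$ gives $\fm \bar b \subseteq \underline{x}I$, and then $I \subseteq \fm$ yields $fI\bar b \subseteq f\underline{x}I \subseteq \underline{x}R$. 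Consequently $\Soc(\bar S) = \{\bar b\, t : \bar b \in \Soc(I/\underline{x}I)\}$, which is one-dimensional and generated by $ut$ (where $u$ is the socle generator of the injective hull $\omega_{R/\underline{x}R}$). Thus $\bar S$ is zero-dimensional Gorenstein with socle generator $ut$, and $S$ itself is Gorenstein.
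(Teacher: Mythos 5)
Your proposal is correct, and for the central Gorenstein claim it follows essentially the same route as the paper: decompose $S/\underline{x}S$ as $R/\underline{x}R \oplus (I/\underline{x}I)\,t$, kill the $R/\underline{x}R$-component of any socle element by using that $I/\underline{x}I$ is a faithful $R/\underline{x}R$-module, and identify $\Soc(S/\underline{x}S)$ with $\Soc(I/\underline{x}I)\,t$, which is one-dimensional and generated by $ut$. The two places where you diverge are small but both tighten the argument rather than change it: for the regular-sequence claim you invoke the maximal Cohen--Macaulayness of $S\cong R\oplus I$, which handles the $I$-component automatically (the paper's componentwise computation only verifies the $R$-component, and the $I$-component does need $x_1,\ldots,x_k$ to be regular on $I\cong\omega_R$); and in the locality argument you justify that the norm $r^2-b^2f$ is a unit via $b\in I\subseteq\fm$, which is the justification actually required once $f$ is allowed to be a unit (the case $f=1$ used later), whereas the paper's text still appeals to the old hypothesis $f\in\fm$.
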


\begin{proof}

Although this result was proved~\cite{E} only in the case $f \in \fm$, the statements can be proved with essentially the same proof.  Since we will be using
our construction for the case when $f$ is a unit, we will include a proof, at times more elementary,

We would like to give proofs to each of these statements:

{\it $S$ is local, with maximal ideal equal to $\m + It$:}

We will show that $S \setminus (\m+It)$ consists of units in $S$. Let $a+bt \in S$ such that $a$ is not in $\m$. So, $a$ is unit $R$.
We are looking for $c, d$ such that $(a+bt)(c+dt) =1$. That is, $ac+bdf =1$ and $ad+bc=0$. Using that $a$ is invertible in $A$, we can solve for
$c$ in the first equation and plug into the second obtaining that $d(b^2f-a^2) = b$. Observe that $f \in \m$ implies that
$b^2f \in \m$ and so $b^2f-a^2$ is unit in $R$. In conclusion, $d= \frac{b}{b^2f-a^2}$ and, a simple computation shows that $c =\frac{a}{b^2f-a^2}$.

{\it $S$ is domain if and only if $R$ is a domain and $f$ is not a square in the total ring of fractions of $R$:}

As in the paragraph above, $(a+bt)(c+dt) =0$ is equivalent to $ac +bdf =0$ and $ad+bc=0$. Eliminating $c$, we get $d(b^2f-a^2)=0$
and then similarly $c(b^2f-a^2) =0$. One direction is now clear. For the other direction, assume that $S$ is domain. Then $R \subset S$ is also domain.
If $f = b^2/a^2$ for some $a, b$ in $R$, we can rewrite $f = (bx)^2/(ax)^2$, for $0 \neq x \in I$ and then $(ax + bxt)(ax-bxt)=0$ which shows that 
$S$ is not domain.

{\it A regular sequence $x_1,...,x_k$ on $R$ is a regular sequence on $S$:}

Let $1 \leq i \leq k$ be an integer. Assume that $x_i \cdot (a+bt) = x_1(a_1 + b_1t)+ \cdots + x_{i-1}(a_{i-1}+b_{i-1}t)$ where $a, a_j \in R$
and $b, b_j \in I$, for all $1 \leq j \leq i-1$. So, $x_i a = \sum x_j a_j$ and hence $x_i \in (x_1, \cdots, x_{i-1})R \subset (x_1, \cdots, x_{i-1})S$.

{\it S is Gorenstein:}
Using the notations of the previous paragraph, denote $J = (x_1,...,x_d)$.

Let $u \in I$ be such that its image in $I/JI$ generates the socle of $I/JI$. In particular, $\fm u \subset JI$.

Let $\overline{ut}$ be the class of $ut$ in $S/JS$. Clearly $(\fm+It) ut = Ifu + \fm u t \subset  JI + JI t \subset JR + JI t=JS$, so $\overline{ut} \in Soc(S/JS)$.

We prove now that $\overline{ut}$ generates the socle of $S/JS$. Let $a \in R$, $b \in I$. If $(\fm +It) (a+bt) \in JS$, then
$$\fm a +bfI \in JR,$$ and
$$Ia + \fm b \in JI.$$
In particular $\fm b \in JI$, and, since $u$ is generator for the socle of $I/JI$, we get that $ b \in uR + JI$. Also, $Ia \subset JI$ says that
$a$ kills the module $I/JI$ which is a faithful module over $R/J$, so $a \in J$. 

Hence, $a+bt \in JR + utR + JIt$, and then $\overline{a+bt}$ is a multiple of $\overline{ut}$ in $S/JS$.
\end{proof}

We would like to investigate the conditions on $R$ and $f$ that make $S(f)$ F-injective. Since $S(f)$ is Gorenstein and hence Cohen-Macaulay, the $S(f)$ is F-injective if and only if any (equivalently, for some) ideal $J$ generated by a system of parameters in $S(f)$ is Frobenius closed. So,  we need
to find the conditions which imply that an ideal of $S(f)$ generated by a system of parameters is Frobenius closed

\begin{Prop}
\label{one}
Let $(R, m, k)$ be a local Cohen-Macaulay ring of prime characteristic $p >0$ that admits a canonical ideal $I$. Let $\underline{x}= x_1, x_2,...,x_d$ be 
system of parameters in $R$ and $u$ the socle generator of $I/\underline{x}I$.
 
\item (i) If $p >2$, then there exists $q$ large enough such that $u^qf^{(q-1)/2} \in (x_1^q, x_2^q,...,x_d^q)I$ if and only if $S(f)$ is not $F$-injective.

\item (ii) If $p=2$, $S(f)$ is never $F$-injective.

\end{Prop}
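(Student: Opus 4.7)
The plan is to reduce F-injectivity of $S = S(f)$ to a Frobenius-closure condition on a single parameter ideal, then compute $(ut)^q$ directly using $t^2 = f$ and translate via the $R$-module decomposition $S = R \oplus It$. First, since $S$ is Cohen--Macaulay and Gorenstein by Theorem~\ref{properties}, F-injectivity of $S$ is equivalent to the parameter ideal $\underline{x}S$ being Frobenius closed, which in turn is equivalent to its unique socle generator $\overline{ut}$ (again by Theorem~\ref{properties}) not lying in $(\underline{x}S)^F$. Thus $S(f)$ fails to be F-injective precisely when $(ut)^q \in \underline{x}^{[q]} S$ for some $q = p^e$.

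Next I would split by parity of $q$. Using $t^2 = f$, one has $t^q = f^{(q-1)/2}\,t$ when $q$ is odd, and $t^q = f^{q/2}$ when $q$ is even. For $p>2$, every $q=p^e$ is odd, so $(ut)^q = u^q f^{(q-1)/2}\,t$; for $p=2$ and $q=2^e \geq 2$, $(ut)^q = u^q f^{q/2}$ already lies in $R$. The direct-sum decomposition $\underline{x}^{[q]}S = \underline{x}^{[q]}R \oplus \underline{x}^{[q]}I\,t$ then converts the containment $(ut)^q \in \underline{x}^{[q]}S$ into $u^q f^{(q-1)/2} \in \underline{x}^{[q]}I$ in the odd case, giving part~(i) as a direct equivalence; in the even case the condition becomes $u^q f^{q/2} \in \underline{x}^{[q]}R$.

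For part~(ii), the task is to show that the condition $u^q f^{q/2} \in \underline{x}^{[q]}R$ holds automatically for some $q = 2^e$. The key structural input is that $u$ is the socle generator of $I/\underline{x}I$, so $\m u \subseteq \underline{x}I$; combined with $u \in I \subseteq \m$ (as $I$ is a proper canonical ideal), this yields $u^2 = u\cdot u \in u\m \subseteq \underline{x}I$. Writing $u^2 = \sum_i x_i b_i$ with $b_i \in I$ and taking $2^{e-1}$-th Frobenius powers in characteristic $2$, all multinomial cross terms vanish, so $u^{2^e} = \sum_i x_i^{2^{e-1}} b_i^{2^{e-1}}$ and hence $u^{2^e} f^{2^{e-1}} = \sum_i x_i^{2^{e-1}} (b_i f)^{2^{e-1}}$ sits naturally in $\underline{x}^{[2^{e-1}]}R$. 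The main obstacle, I expect, is bridging from $\underline{x}^{[2^{e-1}]}R$ to $\underline{x}^{[2^e]}R$: closing this gap will require iterating the socle-type relation (using $uI \subseteq \underline{x}I$, which is immediate from $\m u \subseteq \underline{x}I$ together with $I \subseteq \m$) to further rewrite each $b_i$ as a $\underline{x}$-combination of elements of $I$ and then redistribute the Frobenius powers, so that after enough iteration the full sum lands in $\underline{x}^{[2^e]}R$.
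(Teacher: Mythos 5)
Your reduction to the Frobenius closure of the single parameter ideal $\underline{x}S$, and your treatment of part (i) — computing $(ut)^q = u^q f^{(q-1)/2}t$ for odd $q$ and reading off the $It$-component of $\underline{x}^{[q]}S = \underline{x}^{[q]}R \oplus \underline{x}^{[q]}I\,t$ — is exactly the paper's argument and is correct.

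Part (ii), however, has a genuine gap, which you yourself flag. Starting from $u^2 \in \underline{x}I$ and raising to the $2^{e-1}$-st power only yields $u^{2^e}f^{2^{e-1}} \in \underline{x}^{[2^{e-1}]}R$, whereas the required containment is in $\underline{x}^{[2^e]}R$, and your proposed repair does not close this gap: the elements $b_i$ in $u^2 = \sum_i x_i b_i$ are arbitrary elements of $I$, so the relation $uI \subseteq \underline{x}I$ gives you no handle for rewriting $b_i$ itself as an $\underline{x}$-combination, and the exponent deficit of a factor of $2$ persists under any number of iterations. The missing idea is that $u$ itself already lies in $J=(\underline{x})$: since $I \subseteq \fm$ one has $uI \subseteq \fm u \subseteq JI$, so $u$ annihilates $I/JI$, and $I/JI \cong \omega_{R/J}$ is a \emph{faithful} module over the Artinian ring $R/J$, forcing $u \in J$. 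Writing $u = \sum_i x_i c_i$ and squaring in characteristic $2$ gives $u^2 = \sum_i x_i^2 c_i^2 \in J^{[2]}$, so already $(ut)^2 = u^2 f \in J^{[2]}S$ and $q=2$ suffices. This faithfulness step is the substantive content of part (ii) in the paper's proof and cannot be replaced by power manipulations on the relation $u^2 \in \underline{x}I$ alone.
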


\begin{proof}
As earlier denote $S:=S(f)$. For an s.o.p. $x_1, \cdots, x_d$ in $R$, let
$J=(x_1,...,x_d)$. Then $JS$ is a parameter ideal in $S$ and $S$ is Gorenstein.

If $JS$ is not Frobenius closed, then a multiple of $ut$, the socle generator modulo $JS$, must belong to $(JS)^F \setminus JS$.
This implies that $ut$ itself belongs to $(JS)^F \setminus (JS)$. In conclusion, $ut \in (JS)^F \setminus (JS)$ if and only if
$S$ is not $F$-injective.

First, let us treat the case $p \neq 2$. 

Note that for $a+bt \in S$, we have that $(a+bt)^q = a^q + b^q t^q = a^q + b^qf^{(q-1)/2}t$. Let $q$ such that $(ut)^q \in (JS)^{[q]}$. 
By taking Frobenius powers, we can assume that $q$ is large enough. So, $u^qf^{(q-1)/2}t= \sum_{i=1}^d x_i^q (a_i+b_it)$ with $a_i \in R$ and $b_i \in I$.
Hence $u^qf^{(q-1)/2}= \sum_{i=1}^d x_i^q b_i$ (and $\sum_{i=1}^d x_i^q a_i =0$, which can be easily arranged independent of $u$). This proves 
the first part.

Let us look at the remaining case $p=2$. Since $q=2^e$, $(a+bt)^q = a^q + b^q t^q= a^{2^e} + b^{2^e} f^{2^{e-1}}$ which represents an element of $R$.
So, $(ut)^q \in (JS)^{[q]}$ is equivalent to $u^{2^e}f^{2^{e-1}} = \sum_{i=1}^d x_i^{2^e}(a_i+b_it)$. It follows that 
$u^{2^e}f^{2^{e-1}} = \sum_{i=1}^d x_i^{2^e}a_i$. 

For a more precise illustration, take $e=1$. Then $(ut)^2= u^2f$. But $\fm u \subset JI$ so $u$ kills $I/JI$ and hence $u \in J$. In conclusion, $u^2 \in J^{[2]} \subset J$ and hence
$(ut)^2 \in J^{[2]}S$.

\end{proof}

\begin{Prop}
\label{two}
Let $\ringR$ be a generically Gorenstein Cohen-Macaulay ring and $I \subset R$ an ideal isomorphic to the canonical module of $R$, $\omega_R$.
Let $x=x_1 \in I$ be a nonzerodivisor  (NZD) on $R$ and $x_2,...,x_d$ elements in $R$ that form a regular sequence on $R/I$ and $R/xR$. Denote $J = (x_1,...,x_d)$. Then 
$R/I$ is a maximal Cohen-Macaulay module of type $1$ 
over $R/xR$ and $Soc(\frac{R}{I +(x_2, \cdots, x_d)R})$ is canonically isomorphic to $Soc(\frac{I}{JI})$.
\end{Prop}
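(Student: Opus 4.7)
The plan is to verify the two claims in sequence, using the structure theory of canonical modules.

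For the first claim, I begin by noting that the hypotheses force $x_1, x_2, \ldots, x_d$ to be a regular sequence on $R$ (since $x_1$ is a NZD on $R$ and $x_2, \ldots, x_d$ is regular on $R/xR$). Viewing $R/I$ as an $R/xR$-module, its Krull dimension equals $d - 1 = \dim R/xR$, and the given regular sequence $x_2, \ldots, x_d$ on $R/I$ shows $\depth_{R/xR}(R/I) \geq d - 1$, so $R/I$ is MCM over $R/xR$. To identify the type, I would apply $\operatorname{Hom}_R(-, \omega_R)$ to the exact sequence $0 \to I \to R \to R/I \to 0$: using $I \cong \omega_R$, $\operatorname{End}_R(\omega_R) \cong R$ (which holds because $R$ is generically Gorenstein CM), and $\Hom{R}{R/I}{\omega_R} = 0$ (because $\dim R/I < d$), one obtains $\omega_{R/I} \cong R/I$. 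Hence $R/I$ is Gorenstein of dimension $d - 1$, and modding out by the s.o.p. $x_2, \ldots, x_d$ yields the zero-dimensional Gorenstein ring $R/(I + (x_2, \ldots, x_d)R)$, whose socle is one-dimensional. Since this socle equals $\Soc((R/I)/(x_2, \ldots, x_d)(R/I))$, the type of $R/I$ as an MCM $R/xR$-module is $1$.

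For the canonical isomorphism of socles, I would exhibit it as multiplication by $x_1$. Define $\phi \colon R/(I + (x_2, \ldots, x_d)R) \to I/JI$ by $\phi(\bar a) = \overline{x_1 a}$. Well-definedness holds because $x_1 I \subseteq JI$ and $x_1(x_2, \ldots, x_d)R = (x_2, \ldots, x_d)(x_1 R) \subseteq (x_2, \ldots, x_d)I \subseteq JI$ (using $x_1 \in I$). I claim $\phi$ is injective on the whole module: if $x_1 a \in JI = x_1 I + (x_2, \ldots, x_d)I$, write $x_1 a = x_1 i_0 + \sum_{k=2}^d x_k i_k$ with $i_0, i_k \in I$, so $x_1(a - i_0) \in (x_2, \ldots, x_d)R$; the regular sequence property of $x_1, \ldots, x_d$ then forces $a - i_0 \in (x_2, \ldots, x_d)R$, i.e., $a \in I + (x_2, \ldots, x_d)R$.

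Since $\phi$ is $R$-linear, it restricts to an injection $\Soc(R/(I + (x_2, \ldots, x_d)R)) \hookrightarrow \Soc(I/JI)$. The target also has dimension $1$: the isomorphism $I \cong \omega_R$ induces $I/JI \cong \omega_R/J\omega_R \cong \omega_{R/J}$ (using that $x_1, \ldots, x_d$ is a regular sequence on the MCM module $\omega_R$), and $\omega_{R/J}$ is the injective hull of $k$ over the Artinian ring $R/J$, hence has one-dimensional socle. An injection between nonzero one-dimensional $k$-vector spaces is an isomorphism, so $\phi$ yields the desired canonical iso. The step that carries the most weight is the injectivity computation, which relies essentially on the regular sequence property to pass from the relation $x_1(a - i_0) \in (x_2, \ldots, x_d)R$ back to $a - i_0 \in (x_2, \ldots, x_d)R$.
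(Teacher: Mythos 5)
Your proof is correct and follows essentially the same route as the paper: the socle isomorphism is realized as multiplication by $x=x_1$, injectivity is extracted from the relation $x_1(a-i_0)\in(x_2,\ldots,x_d)R$ via the regular sequence property, and the conclusion follows because both socles are one-dimensional. The only difference is that you supply proofs of facts the paper takes as known (that $R/I$ is Gorenstein, via dualizing $0\to I\to R\to R/I\to 0$, and that $\Soc(I/JI)$ is one-dimensional, via $I/JI\cong\omega_{R/J}$), which is a matter of detail rather than of method.
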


\begin{proof}
It is known that $R/I$ is Gorenstein, so let $z \in R$ such that $\overline{z}$ is the socle generator of $\frac{R}{I +(x_2, \cdots, x_d)}$.

Note that there is a natural $R$-linear map, $R/I \stackrel{\cdot x}{\to} I/xI$ which is, in fact, injective because
$x$ is a NZD on $R$.

Let us prove now that $Soc(\frac{R}{I +(x_2, \cdots, x_d)R})$ is mapped injectively into $Soc(\frac{I}{JI})$:

Let $z \in R \setminus (I +(x_2, \cdots, x_d)R)$ such that $m z \subset I +(x_2, \cdots, x_d)R$. 
Then $\fm \cdot xz \subset xI + x(x_2, \cdots, x_d)R \subset JI$.

To prove that $xz$ is not in $JI$ it is enough to note that if $xz = xa_1 +x_2a_2 + \cdots + x_d a_d$ with $a_i \in I$, then
$z -a_1 \in (x_2,...,x_d) R$, since $x, x_2, ...., x_d$ are a regular sequence on $R$. In conclusion, $z \in I +(x_2, \cdots, x_d)R$
which is a contradiction.

Now, $Soc(\frac{R}{I +(x_2, \cdots, x_d)R})$ maps injectively into $Soc(\frac{I}{JI})$. Both are $1$-dimensional over $R/\fm$ and hence
must be isomorphic. This proves the assertion.
\end{proof}

\begin{Thm}
Let $\ringR$ be a local F-injective Cohen-Macaulay ring of prime characteristic $p >2$ and dimension $d \geq 1$ that admits a canonical module. Let $I$ be an ideal of $R$
isomorphic to the canonical module of $R$.

Assume that $R/I$ is F-injective. Then $S=S(1)$ is F-injective.

\end{Thm}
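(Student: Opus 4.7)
The plan is to apply Proposition~\ref{one}(i) and reduce F-injectivity of $S=S(1)$ to a concrete claim about socles. Since $S$ is Gorenstein, it suffices to exhibit a single parameter ideal of $S$ that is Frobenius closed, and by Proposition~\ref{one}(i) (with $f=1$) this amounts to showing that
\[
u^{q} \notin (x_1^{q},\dots,x_d^{q})\,I \qquad \text{for every } q=p^e,
\]
where $\underline{x}=x_1,\dots,x_d$ is a suitably chosen system of parameters of $R$ and $u$ is a generator of the socle of $I/\underline{x}I$. So the whole game is to choose $\underline{x}$ cleverly and rule out such a relation.

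I would choose $\underline{x}$ so that Proposition~\ref{two} applies: pick $x_1=x\in I$ a nonzerodivisor (which exists because $I\cong\omega_R$ is a maximal Cohen--Macaulay $R$-module, hence has depth $d\geq 1$), and then, by prime avoidance, pick $x_2,\dots,x_d\in\fm$ forming a regular sequence both on $R/xR$ and on $R/I$ (possible because both are Cohen--Macaulay of dimension $d-1$). Proposition~\ref{two} then identifies the socle of $I/\underline{x}I$ with the socle of $R/(I+(x_2,\dots,x_d))$ via multiplication by $x$: if $z\in R$ represents a socle generator of $R/(I+(x_2,\dots,x_d))$, then $u:=xz$ is a socle generator of $I/\underline{x}I$.

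Now I proceed by contradiction: assume $u^{q}=x^{q}z^{q}\in (x^{q},x_2^{q},\dots,x_d^{q})I$ for some $q$. Writing this out as $x^{q}z^{q}=x^{q}i_1+\sum_{j=2}^{d}x_j^{q}i_j$ with $i_j\in I$ and using that $x^{q},x_2^{q},\dots,x_d^{q}$ is still a regular sequence on $R$ (since regular sequences are preserved under taking Frobenius powers in a Cohen--Macaulay ring), I can cancel $x^{q}$ to get
\[
z^{q}\in I + (x_2^{q},\dots,x_d^{q}).
\]
Reducing modulo $I$, this says that $\bar z^{q}\in (\bar x_2,\dots,\bar x_d)^{[q]}$ inside $R/I$.

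The finish uses the hypothesis on $R/I$: since $R/I$ is Cohen--Macaulay of dimension $d-1$ with $\bar x_2,\dots,\bar x_d$ a full system of parameters, F-injectivity of $R/I$ is equivalent to every parameter ideal being Frobenius closed. Hence $\bar z\in(\bar x_2,\dots,\bar x_d)$, i.e.\ $z\in I+(x_2,\dots,x_d)$, contradicting that $z$ is a socle generator of $R/(I+(x_2,\dots,x_d))$. The main obstacle, and the point where the argument could have failed, is the translation from the relation in $I/\underline{x}I$ (a module over $R$) back to a concrete Frobenius-closure statement on a parameter ideal of the Cohen--Macaulay quotient $R/I$; this is precisely what Proposition~\ref{two} was designed to enable, and the hypothesis $p>2$ enters only through Proposition~\ref{one}(i) (the case $f=1$ requires $(q-1)/2$ to be an integer).
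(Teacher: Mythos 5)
Your proposal is correct and follows essentially the same route as the paper: reduce via Proposition~\ref{one}(i) with $f=1$ to the non-containment $u^q\notin (x_1^q,\dots,x_d^q)I$, use Proposition~\ref{two} to write the socle generator as $u=xz$ (the paper writes $u=xz+v$ with $v\in\underline{x}I$, which is the same thing up to the choice of representative), cancel $x^q$ against the regular sequence to get $z^q\in I+(x_2^q,\dots,x_d^q)$, and invoke Frobenius closure of parameter ideals in the F-injective Cohen--Macaulay ring $R/I$ to reach a contradiction. No gaps worth noting.
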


\begin{proof}
Choose $x =x_1$ be a nonzerodivisor on $R$ contained in $I$.

We can choose $x_2,...,x_d$ elements in $R$ that form a regular sequence on $R/I$ and $R/xR$. Denote $J = (x_1,...,x_d) = (\underline{x}).$
Let $u \in I$ such that its maps to the socle generator of $I/(\underline{x})I$.

Assume that $S(1)$ is not F-injective. By our Proposition~\ref{one}, for each $f$ there exists $q=q(f)$ large enough such that $u^q \in (x^q, x_2^q,...,x_d^q)I$.

Now $u = xz + v= x_1 z+v $ for some $z \in R$ mapping to the socle of $\frac{R}{I +(x_2, \cdots, x_d)R}$ and 
$v \in (\underline{x})I$, by Proposition~\ref{two}. Also, $z$ is not in $I +(x_2, \cdots, x_d)R$ since its image generates the socle of $R$ modulo $I +(x_2, \cdots, x_d)R$.
So, $x^q z^q \in (x^q, x_2^q,...,x_d^q)I$. 

Since $\underline{x}$ form a regular sequence on $R$, we get that
$z^q \in I +(x_2^q, \ldots, x_d^q)$. But $x_2, \ldots, x_d$ generate parameter ideal in $R/I$.

Therefore $z \in I+(x_2, \ldots, x_d)$ since $R/I$ is F-injective. This is a contradiction.

\end{proof}

\begin{Cor}
\label{maincor}
Let $\ringR$ be a local F-injective Cohen-Macaulay ring of prime characteristic $p >2$  that admits a canonical module. Let $I$ an ideal of $R$
isomorphic to the canonical module of $R$. Assume that $R/I$ is F-injective. 

Then $R$ is FH-finite and $H^d_m(R)$ is antinilpotent.

\end{Cor}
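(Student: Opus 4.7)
The plan is to transfer FH-finiteness from the pseudocanonical cover $S = S(1)$ to $R$ (and to each $R_n = R[[x_1,\ldots,x_n]]$), and then to invoke Theorem~\ref{anti} to upgrade this to antinilpotency. By the preceding Theorem, $S$ is F-injective, and by Theorem~\ref{properties} it is Gorenstein, so the Enescu--Hochster result recalled in the introduction shows that $S$ is FH-finite.

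To pass FH-finiteness down to $R$, I would embed the lattice of F-compatible $R$-submodules of $H^d_\fm(R)$ into the lattice of F-compatible $S$-submodules of $H^d_\fm(S)$. Because $p$ is odd and $t^2 = 1$, one has $t^p = t$, so the Frobenius of $S$ preserves the $R$-module direct-sum decomposition $S = R \oplus It$; consequently the induced $R$-module decomposition $H^d_\fm(S) = H^d_\fm(R) \oplus H^d_\fm(It)$ is Frobenius-stable, while the $S$-action sends $(It)\cdot H^d_\fm(R)$ into the second summand. For an F-compatible $R$-submodule $N \subseteq H^d_\fm(R)$, the $S$-submodule $SN = N + (It)N$ remains F-compatible under $F_S$ (using $F_S(sn)=s^p F_S(n)$ together with the fact that $F_S$ agrees with $F_R$ on the $R$-summand), and $SN \cap H^d_\fm(R) = N$ because $(It)N$ lies in the other summand. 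Thus $N \mapsto SN$ is injective, and FH-finiteness of $S$ forces FH-finiteness of $R$.

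For the antinilpotency of $H^d_\fm(R)$ I would invoke Theorem~\ref{anti}, which reduces the problem to showing that $R_n = R[[x_1,\ldots,x_n]]$ is FH-finite for every $n \in \mathbb{N}$. The hypotheses of the Corollary pass to $R_n$: it is Cohen-Macaulay with canonical ideal $IR_n$, and $R_n/IR_n \cong (R/I)[[x_1,\ldots,x_n]]$. Invoking the standard stability of F-injectivity under adjoining a power-series variable, both $R_n$ and $R_n/IR_n$ are F-injective, so the preceding argument applies to $R_n$ and yields FH-finiteness for each $n$. Theorem~\ref{anti} then gives antinilpotency of $H^i_\fm(R)$ for all $i$, which is the stated antinilpotency of $H^d_\fm(R)$ (the remaining $H^i_\fm(R)$ vanish by the Cohen-Macaulay hypothesis). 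The main obstacle is the Frobenius-equivariant intersection computation $SN \cap H^d_\fm(R) = N$; once that is verified, the remaining steps, namely identifying $S$ as Gorenstein F-injective, checking that the hypotheses descend to $R_n$, and citing stability of F-injectivity under power-series extensions, are all essentially bookkeeping.
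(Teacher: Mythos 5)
Your proposal is correct and follows the same overall strategy as the paper: show that $S(1)$ is Gorenstein and F-injective (hence FH-finite by the Enescu--Hochster theorem recalled in the introduction), descend FH-finiteness to $R$, then pass to $R[[x_1,\ldots,x_n]]$ and invoke Theorem~\ref{anti} to get antinilpotency. The only place you diverge is the descent step: the paper simply cites Lemma 2.7 of \cite{EH}, using that $R \to S(1)$ is $R$-split, whereas you reprove that descent by hand via the Frobenius-stable $R$-module decomposition $H^d_\fm(S) = H^d_\fm(R) \oplus H^d_\fm(It)$ (valid since $t^p=t$ when $p$ is odd and $f=1$) and the lattice embedding $N \mapsto SN$ with $SN \cap H^d_\fm(R) = N$. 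Your verification is sound --- multiplication by $It$ carries the first summand into the second, and $F_S$ restricts to $F_R$ on the first summand --- so this amounts to a self-contained proof of the relevant special case of the cited lemma; the citation buys brevity, while your version makes the splitting mechanism explicit. The remaining steps (preservation of the hypotheses, including F-injectivity and the canonical ideal $I[[x_1,\ldots,x_n]]$, under adjoining power-series variables, and the appeal to Theorem~\ref{anti}) match the paper's argument.
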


\begin{proof}
We know that $S(1)$ is FH-finite. This descends to $R$ since $R \to S(1)$ is $R$-split by Lemma 2.7 in~\cite{EH}.

The hypothesis are preserved if we pass to $R[[x]]$ because a canonical ideal for it is equal to $I[[x]]$ (Corollary 3.3.21 in~\cite{BH} applies here directly). So, we get that $H^d_m(R)$ is in fact antinilpotent, since $R[[x]]$ is FH-finite by Theorem~\ref{anti}.
\end{proof}

\begin{Rem}
It can be noted that the hypothesis of the Conjecture~\ref{conj} is preserved by passing to a formal power series so an equivalent statement is that for a Cohen-Macaulay F-pure ring $R$
 the module $\hr$ is antinilpotent. As we mentioned earlier, when $R$ is Gorenstein, the conjecture is known and, in that case, $\hr$  is naturally isomorphic to $E_R(k)$. Therefore, the reader might wonder whether
it is natural to formulate a statement similar to the Conjecture~\ref{conj} for $E_R(k)$ instead of $\hr$.
 
 In response to this question, one can note that, for a Cohen-Macaulay F-pure ring $R$,
 the injective hull of its residue field, $E_R(k)$ admits a Frobenius action such that $E_R(k)$ is antinilpotent, by a result of Sharp (Lemma 3.1 in~\cite{Sh} and Proposition 1.8 in~\cite{Sh2}).
\end{Rem}

\begin{Exa}
We would like to illustrate our corollary with a nontrivial example due to Katzman, see~\cite{K} where the example is considered over $\mathbb{F}_2$. Let $k=\mathbb{F}_3$ and let $S= k[[x_1,\ldots, x_5]]$. Let
$\mathcal{I}$ be the ideal generated by the $2 \times 2$ minors of 

\[ 
(\begin{array}{cccc}
x_1 & x_2 & x_2 & x_5 \\
x_4 & x_4& x_3 & x_1 
\end{array} ).
\]

Consider $R = S/\mathcal{I}$. The ring $R$ is Cohen-Macaulay reduced and two dimensional. The ideal $I=(x_1, x_4, x_5)$ is a canonical ideal. The quotient $R/I = \mathbb{F}_3[x_2, x_3]/(x_2x_3)$ is F-pure. We have checked
these claims using Singular.

According to our main result, the ring $R$ is FH-finite.
\end{Exa}

Finally we would like to observe the following consequence of Theorem 4.21 in~\cite{EH}.

\begin{Rem}
Let $\ringR$ be Cohen-Macaulay F-pure such that the test ideal of $R$ is $\fm$-primary. Then $R$ is FH-finite. In particular Conjecture~\ref{conj} holds for one dimensional rings.
\end{Rem}

The proof of this remark is an immediate consequence on Theorem 4.21 in~\cite{EH} as follows. First complete $R$ then enlarge the residue field of $R$ such that it is perfect. The test ideal $\tau$ remains $\fm$-primary and satisfies
$\tau I^* \subseteq I$ for all ideals $I$ generated by parameters. Moreover, for one-dimensional complete rings the test ideal exists and is automatically $\fm$-primary.

\begin{Rem}
The conjecture also holds for Stanley-Reisner rings as shown in Theorem 5.1 in~\cite{EH} which in fact contains a more general statement.
\end{Rem}

\end{document}